\title{The Generalized Point-Vortex Problem and Rotating Solutions to the Gross-Pitaevskii Equation on Surfaces of Revolution}
\author{Ko-Shin Chen \\Department of Mathematics, \\ Indiana University, Bloomington, IN 47405 \\ koshchen@indiana.edu}
\numberwithin{equation}{section}
\theoremstyle{plain}
\newtheorem{prop}{Proposition}[section]
\newtheorem{lem}{Lemma}[section]
\newtheorem{thm}{Theorem}[section]
\theoremstyle{remark}
\newtheorem{rk}{\bf Remark}[section]
\begin{document}
\maketitle
\begin{abstract}
We study the generalized point-vortex problem and the Gross-Pitaevskii equation on a surface of revolution. We find rotating periodic solutions to the generalized point-vortex problem, which have two two rings of n equally spaced vortices with degrees $\pm 1$. In particular we prove the existence of such solutions when the surface is longitudinally symmetric. Then we seek a rotating solution to the Gross-Pitaevskii equation having vortices that follow those of the point-vortex 
flow for $\varepsilon$ sufficiently small.
\end{abstract}

\section{Introduction}
In this paper we study the generalized point-vortex problem and the Gross-Pitaevskii equation on a surface of revolution, $\cal M$. Given $n$ points $\{a_i\}_{i=1}^n \subset \cal M$ and their associated degrees $\{d_i\}_{i=1}^n$, we consider the Hamiltonian system given by
\begin{equation} \label{ODE gen}
    d_i \frac{d}{dt} a_i = -\frac{1}{\pi} \nabla^\perp_{a_i} W(\mathbf a, \mathbf d) \mbox{ for } i = 1,2,...,n,
\end{equation}
where $W$ is a so-called renormalized energy depending on $\mathbf a = (a_1, a_2,...,a_n)$ and $\mathbf d =(d_1, d_2,...,d_n)$. The function $W$ involves a logarithmic interaction between the vortices and a precise definition is given below in \eqref{Wdefna}.
The system \eqref{ODE gen} arises in particular as the limit of the Gross-Pitaevskii equation
\begin{equation} \label{GP}
    iU_t = \Delta U + \frac{1}{\varepsilon^2} (1-|U|^2)U
\end{equation}
for $U: {\cal M} \times {\mathbb R} \rightarrow {\mathbb C}$, though in the plane and on the two-sphere it is more commonly associated with vortex motion for
the incompressible Euler equations. The Gross-Pitaevskii equation has been a fundamental model in studying superfluidity, Bose-Einstein condensation and nonlinear optics. Here $\cal M$ is compact, simply connected without boundary, and  $\Delta$ is the Laplace-Beltrami operator on $\cal M$. The dynamics of the flow \eqref{GP} preserve the Ginzburg-Landau energy
\begin{equation} \label{GL energy}
    E_\varepsilon(U) = \int_{\cal M} \frac{|\nabla U|^2}{2} +\frac{(1-|U|^2)^2}{4 \varepsilon^2}.
\end{equation}
Indeed if one makes the usual association of ${\mathbb C}$ with ${\mathbb R^2}$ and views $U$ as a map taking values in ${\mathbb R^2}$ then one can formally write \eqref{GP} as $U_t=\nabla^\perp E_\varepsilon(U).$
In the asymptotic regime $\varepsilon \ll 1$, the analysis of \eqref{GP} can be effectively carried out by tracking the motion of a finite number of vortices, which are zeros of $U$ with non-zero degrees. Furthermore, the role of $E_\varepsilon$ is replaced by $W$. In \cite{BBH} the importance of $W$ was first related to the stationary solution to \eqref{GP} in a planar domain with Dirichlet boundary conditions. From Theorem 4.3 in \cite{KC2}, the asymptotic motion law for vortices is governed by \eqref{ODE gen} where the points $\{a_i\}$ are viewed as vortices. In a planar domain or on a sphere, \eqref{ODE gen} is known as the classical point-vortex problem and has been studied extensively. But as far as we know, there has not been much work done when the problem is set on other manifolds. The primary goals of this article are two-fold:  To identify certain periodic solutions to \eqref{ODE gen} posed on surfaces of revolutions and then to establish the existence of corresponding periodic solutions to \eqref{GP}.
\\

To study \eqref{ODE gen} on $\cal M$, we will appeal to a result of \cite{B} where the author identifies $W$ on a Riemannian 2-manifold. For a compact, simply-connected surface without boundary, one can apply the Uniformization Theorem to assert the existence of a conformal map $h: {\cal M} \rightarrow {\mathbb R}^2 \bigcup\{\infty \}$, so that the metric $g$ is given by
\begin{equation}
e^{2f}(dx_1^2 + dx_2^2),
\end{equation}
for some smooth function $f$. Thus one may identify a vortex $a_i \in\cal M$ with a point $b_i=h(a_i)\in{\mathbb R}^2 \bigcup \{ \infty \}$. Writing
$\mathbf b = (b_1, b_2, ..., b_n)$ with associated degrees $\mathbf d = (d_1, d_2, ..., d_n)$, a result in \cite{B} identifies  the renormalized energy as
\begin{equation}\label{Wdefna}
    W(\mathbf b, \mathbf d) := \pi \sum_{i=1}^n d_i^2f(b_i) - \pi \sum_{i\neq j} d_i d_j \ln|b_i - b_j|.
\end{equation}
Then \eqref{ODE gen} can be rewritten as
\begin{equation} \label{GPV}
    d_i \dot{\mathbf b_i} = -e^{-2f(\mathbf b_i)} \left[\nabla^{\perp} f(\mathbf b_i) - 2 \sum_{j \neq i} d_i d_j\frac{(\mathbf b_i - \mathbf b_j)^{\perp}}{|\mathbf b_i- \mathbf b_j|^2}\right].
\end{equation}
We call \eqref{GPV} the generalized point-vortex problem on a simply connected Riemannian manifold. When the domain is ${\mathbb R}^2$, $f \equiv 0$ and $W$ reduces to the standard logarithm. When the domain is a sphere, $f$ is induced by the stereographic projection and $W$ actually is the sum of the logarithms of Euclidean distances between vortices. In both cases, \eqref{GPV} reduces to the classical point-vortex problem. A discussion of known result can be found in \cite{N}. One can also consider the case of a bounded planar domain with boundary conditions or the flat torus where the formulas for $W$ can be found in \cite{JS}, \cite{LX}, \cite{CJ1}, and \cite{CJ2}.
\\

In Section 2, we introduce a conformal map mapping from ${\mathbb R}^2 \bigcup \{\infty\}$ to $\cal M$ as in \cite{KC} and identify the explicit formula for $f$ in \eqref{Wdefna}. Then we find rotating periodic solutions to \eqref{ODE gen} having two rings, $C_\pm$, of $n$ equally spaced vortices with degrees $\pm 1$ such that the total degree is zero. In particular, in Proposition \ref{prop 1} we prove the existence of such solutions when $\cal M$  is longitudinally symmetric. Since $\cal M$ is compact without boundary, zero total degree
is necessary for making a connection with Gross-Pitaevskii vortices.
\\

On a sphere, the connection between Gross-Pitaevskii and point vortex dynamics has been studied in \cite{GS}. Following a similar argument in \cite{GS}, we generalize their results in Section 3 to the setting on $\cal M$ which is longitudinally symmetric. The approach is based on minimization of the Ginzburg-Landau energy \eqref{GL energy} subject to a momentum constraint. For any rotating periodic solution to \eqref{ODE gen} having two rings placed symmetrically, we construct in Theorems \ref{main 1} and \ref{main 2} a rotating solution to \eqref{GP} having vortices that follow those of the point-vortex flow for $\varepsilon$ sufficiently small.

\section{Generalized Point Vortex Motion}
In this section we study the generalized point vortex problem \eqref{GPV} on a surface of revolution.
Let $\cal M \subset {\mathbb R}$$^3$ be the surface obtained by rotating a regular curve

\[
        \gamma (s) = (\alpha(s),0,\beta(s)), \quad 0\leq s \leq l, \quad \alpha(s) > 0 \mbox{ for } s \neq 0, l.
\]
about the $Z$-axis, where $s$ is the arc length, i.e. $| \gamma '| = 1$.
Furthermore, make the assumptions
\[
     \alpha (0)  = \alpha(l) =\beta'(0) = \beta'(l) =  0,
\]
so that $\cal M$ is a smooth simply connected compact surface without boundary.
To parametrize $\cal M$, first we define $\mathbf P^\mu: {\cal S}^2 \rightarrow \cal M$ by
\begin{equation} \label{P mu}
\mathbf P^\mu(\theta, \phi) = (\alpha(S(\phi)) \cos \theta, \alpha(S(\phi)) \sin \theta, \beta(S(\phi))),
\end{equation}
where $S: [0, \pi] \rightarrow [0,l]$ satisfies
\begin{equation} \label{S ODE}
    S'(\phi) \sin \phi = \alpha (S(\phi)),
\end{equation}
and $(\theta, \phi)$ are spherical coordinates on ${\cal S}^2$ with $\phi$ corresponding to the angle made with $Z$-axis. In fact, \eqref{S ODE} makes the projection $\mathbf P^\mu$ conformal such that parameter values $(\theta, \phi)$ corresponding
to a point $\tilde p \in {\cal S}^2$ are mapped to parameter values $(\theta, S(\phi))$ corresponding to the point $p \in \cal M$, where $S(0) = 0$ and $S(\pi) =l$. 
Then let $\mathbf P^\nu: {\mathbb R}^2 \bigcup \{\infty\} \rightarrow {\cal S}^2$ to be the inverse of stereographic projection so that
\[
    \mathbf P^\nu (x,y) = (\theta, \phi)
\]
with
\begin{equation}\label{cos phi}
    \cos \phi = \frac{1-r^2}{1+r^2}, \quad r^2 = x^2+y^2,
\end{equation}
and $\theta \in [0,2\pi)$ is the polar angle of $(x,y)$ in ${\mathbb R}^2$. We parametrize $\cal M$ by defining $\mathbf P: {\mathbb R}^2 \bigcup \{\infty\} \rightarrow \cal M$ through
\begin{equation} \label{P}
    \mathbf P(x, y) = \mathbf P^\mu \circ \mathbf P^\nu (x,y) = (\alpha(S(\phi)) \cos \theta, \alpha(S(\phi)) \sin \theta, \beta(S(\phi))).
\end{equation}
Note that at this point, we view $\theta$ and $\phi$ as functions of $x$ and $y$.
Then the metric on $\cal M$ is given by
\begin{equation} \label{metric}
    g_{\cal M} = e^{2f} (dx^2 + dy^2),
\end{equation}
where
\begin{equation} \label{factor f}
    f = \ln \left( \frac{\alpha(S(\phi))}{r} \right).
\end{equation}
\\

Consider $2n$ vortices $\{\mathbf P_i\}_{i=1}^{2n}$ on $\cal M$ with degrees $\{d_i\}_{i=1}^{2n}$ and their projections $\{\mathbf p_i\}_{i=1}^{2n}$ on the plane defined by \eqref{P mu}-\eqref{P}. Using \eqref{S ODE}, \eqref{cos phi}, and \eqref{factor f} we derive
\begin{align} \label{grad f}
    \nabla f (\mathbf p) &= \frac{r}{\alpha(S(\phi))}\left[\alpha'(S(\phi))S'(\phi)\frac{\nabla \phi}{r} - \alpha(S(\phi))\frac{\mathbf p}{r^3}\right] \notag \\
    &=  \left[\alpha'(S(\phi))-1\right]\frac{\mathbf p}{r^2}.
\end{align}
Thus \eqref{GPV} can be rewritten as
\begin{equation} \label{ODE 2}
     d_i \dot{\mathbf p_i} = \frac{r_i^2}{\alpha_i^2} \left[ (1-\alpha_i') \frac{{\mathbf p_i}^{\perp}}{r_i^2} + 2 \sum_{j \neq i} d_i d_j\frac{(\mathbf p_i - \mathbf p_j)^{\perp}}{|\mathbf p_i- \mathbf p_j|^2}\right],
\end{equation}
where $\alpha_i = \alpha (S(\phi(\mathbf p_i)))$ and $\alpha'_i = \alpha' (S(\phi(\mathbf p_i)))$. Our goal here is to identify periodic rotating solutions to this system where the total degree is zero. Especially we will look for solutions whose orbits are circles on $\cal M$. We start with the case where $n=1$ and set $d_1 = -d_2 =1$.
In order to get a uniform rotational solution of \eqref{ODE 2}, we pursue the ansatz $\mathbf p_i = r_i (\cos \omega_0 t, \sin \omega_0 t)$ for $i = 1, 2$, where $\{r_i\}$ are constants, and $\omega_0$ is a constant to be determined. Plugging this into \eqref{ODE 2} we have
\begin{equation}
    \left\{\begin{array}{ll}
                 - r_1 \omega_0 = \frac{r_1^2}{\alpha_1^2} \left[ \frac{ 1-\alpha_1'}{r_1} - \frac{2}{r_1 - r_2} \right]\\
                 r_2 \omega_0 = \frac{r_2^2}{\alpha_2^2} \left[ \frac{1-\alpha_2'}{r_2} + \frac{2}{r_1 - r_2} \right].
                \end{array} \right.
\end{equation}
Hence $r_1$ and $r_2$ must satisfy
\begin{equation} \label{eq for r}
    -\frac{r_1}{\alpha_1^2} \left[ \frac{ 1- \alpha_1'}{r_1} - \frac{2}{r_1 - r_2} \right] = \frac{r_2}{\alpha_2^2} \left[ \frac{ 1-\alpha_2' }{r_2} + \frac{2}{r_1 - r_2} \right].
\end{equation}
Observe that if $\mathbf P_1$, $\mathbf P_2 \in \cal M$ satisfy
\[
    \alpha_1 = \alpha_2 \text{ and }  \alpha_1' = - \alpha_2',
\]
the equality \eqref{eq for r} holds automatically. In particular, when $\cal M$ is symmetric about plane $Z=\beta(\frac{l}{2})$ denoted by $Z_0$, \eqref{ODE 2} has a one parameter family of circular rotating solutions $\mathbf p_1$, $\mathbf p_2$ such that $\mathbf P_1$ and $\mathbf P_2$ on $\cal M$ are symmetrically located with respect to $Z_0$. From now on when considering  the symmetric situation, we will take $Z_0 = 0$ without loss of generality, i.e. $\cal M$ is symmetric about the $X$-$Y$ plane.
\\

Now we will look for rotating periodic solutions to \eqref{ODE 2} having two rings of $n$ equally spaced vortices with degree $1$ on one ring, $C_{+}$, and degree $-1$ on the other, $C_{-}$. Let $d_i = 1$ for $1 \leq i \leq n$, and $d_i = -1$ for $n+1 \leq i \leq 2n$. We assume that
\[
    \mathbf p_i = r_1 (\cos (\Theta_i + \omega_0t), \sin(\Theta_i + \omega_0t)) \mbox{ for } 1 \leq i \leq n
\]
and
\[
    \mathbf p_i = r_2 (\cos (\Theta_{i-n} + \omega_0t), \sin(\Theta_{i-n} + \omega_0t)) \mbox{ for } n+1 \leq i \leq 2n,
\]
where $\Theta_i = \frac{2\pi}{n}(i-1)$, $\{r_i\}$ are constants, and $\omega_0$ is a constant to be determined. Due to the symmetric vortex structure, it suffices to consider \eqref{ODE 2} for $i=1$ and $i=n+1$, i.e. equations for degree $1$ vortex and that for degree $-1$ vortex. When $i=1$, we have
\begin{align} \label{sum1}
    \sum_{j \neq i} d_i d_j & \frac{(\mathbf p_i - \mathbf p_j)^\perp}{|\mathbf p_i - \mathbf p_j|^2}
        =  \sum_{j=2}^n \frac{(\mathbf p_1 - \mathbf p_j)^\perp}{|\mathbf p_1 - \mathbf p_j|^2} -  \sum_{j=n+1}^{2n} \frac{(\mathbf p_1 - \mathbf p_j)^\perp}{|\mathbf p_1 - \mathbf p_j|^2} \notag \\
        = & \frac{n-1}{2}\frac{\mathbf p_1^\perp}{r_1^2} - \frac{1}{r_1 - r_2}\frac{\mathbf p_1^\perp}{r_1}  - \frac{\mathbf  p_1^\perp}{r_1} \sum_{j=2}^{\lfloor \frac{n+1}{2} \rfloor} \frac{2(r_1 - r_2 \cos \Theta_j)}{r_1^2 + r_2^2 - 2r_1 r_2 \cos \Theta_j} \notag \\
        & -  \left\{\begin{array}{ll}
                        \frac{1}{r_1 + r_2} \frac{\mathbf p_1^\perp}{r_1} & \mbox{ if $n$ is even,} \\
                        0 & \mbox{ if $n$ is odd.}
                \end{array} \right.
\end{align}
Similarly, when $i = n+1$, we have
\begin{align} \label{sum2}
    \sum_{j \neq i} d_i d_j & \frac{(\mathbf p_i - \mathbf p_j)^\perp}{|\mathbf p_i - \mathbf p_j|^2} \notag \\
        = & \frac{n-1}{2}\frac{\mathbf p_{n+1}^\perp}{r_2^2} - \frac{1}{r_2 - r_1}\frac{\mathbf p_{n+1}^\perp}{r_2}  - \frac{\mathbf  p_{n+1}^\perp}{r_2} \sum_{j=2}^{\lfloor \frac{n+1}{2} \rfloor} \frac{2(r_2 - r_1 \cos \Theta_j)}{r_1^2 + r_2^2 - 2r_1 r_2 \cos \Theta_j} \notag \\
        & -  \left\{\begin{array}{ll}
                        \frac{1}{r_1 + r_2} \frac{\mathbf p_{n+1}^\perp}{r_2} & \mbox{ if $n$ is even,} \\
                        0 & \mbox{ if $n$ is odd.}
                \end{array} \right.
\end{align}
Applying \eqref{ODE 2}, \eqref{sum1} and \eqref{sum2} we obtain for $1 \leq i \leq n$
\begin{equation} \label{rotating n vortex}
    \left\{\begin{array}{ll}
        -\omega_0 \mathbf p_i^\perp = \frac{\mathbf p_i^\perp}{\alpha_1^2} \left[ n - \alpha_1' -\frac{2 r_1}{r_1 - r_2} - r_1 Q(r_1, r_2)\right] \\
        \omega_0 \mathbf p_{n+i}^\perp = \frac{\mathbf p_{n+i}^\perp}{\alpha_2^2} \left[ n - \alpha_2' + \frac{2 r_2}{r_1 - r_2} - r_2 Q(r_2, r_1)\right],
    \end{array} \right.
\end{equation}
where
\begin{equation}
    Q(r_1,r_2) = \sum_{j=2}^{\lfloor \frac{n+1}{2} \rfloor} \frac{4(r_1 - r_2 \cos \Theta_j)}{r_1^2 + r_2^2 - 2r_1 r_2 \cos \Theta_j} +
    \left\{\begin{array}{ll}
        \frac{2}{r_1 + r_2} & \mbox { if $n$ is even,} \\
        0 & \mbox{ if $n$ is odd.}
    \end{array} \right.
\end{equation}
Thus $r_1$ and $r_2$ must satisfy
\begin{align} \label{eq for r general}
    - \frac{1}{\alpha_1^2} & \left[ n - \alpha_1' - \frac{2 r_1}{r_1 - r_2} - r_1 Q(r_1, r_2)\right] \notag \\
        & = \frac{1}{\alpha_2^2} \left[ n - \alpha_2' + \frac{2 r_2}{r_1 - r_2} - r_2 Q(r_2, r_1)\right]
\end{align}
Note that
\begin{equation} \label{rQ}
    r_1 Q(r_1, r_2) + r_2 Q(r_2, r_1) = 2n-2.
\end{equation}
From this we conclude again that for $\{\mathbf P_i$, $\mathbf P_{n+i}\}_{i=1}^n \subset \cal M$ satisfy
\[
    \alpha_1 = \alpha_2 \text{ and }  \alpha_1' = - \alpha_2',
\]
the equality \eqref{eq for r general} holds automatically. In particular, we have the following proposition: 
\begin{prop} \label{prop 1}
Suppose $\cal M$ is symmetric about the $X$-$Y$ plane. Then for any $n \geq 1$, there exists a rotating $2n$-vortex solution to \eqref{ODE 2} with orbits $C_{\pm}$ that are symmetric about the $X$-$Y$ plane.
\end{prop}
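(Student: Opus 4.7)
The analysis preceding the statement has already reduced existence of a rotating $2n$-vortex configuration to finding planar radii $r_1, r_2 > 0$ with $r_1 \neq r_2$ satisfying the compatibility relation \eqref{eq for r general}, together with the observation that this relation becomes an identity under the two pointwise equalities $\alpha_1 = \alpha_2$ and $\alpha_1' = -\alpha_2'$ (by virtue of \eqref{rQ}). My plan is therefore not to attack \eqref{eq for r general} head-on, but to translate the assumed reflection symmetry of $\cal M$ into those two equalities, and then to read off $\omega_0$ from either line of \eqref{rotating n vortex}.

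The key preliminary is to transfer the $X$-$Y$ symmetry from $\cal M$ to the building blocks of the parameterization. Reflection through the $X$-$Y$ plane gives $\alpha(l - s) = \alpha(s)$, $\beta(l - s) = -\beta(s)$, and hence $\alpha'(l - s) = -\alpha'(s)$. I would then verify that the implicitly defined map $S:[0,\pi]\to[0,l]$ satisfying \eqref{S ODE} inherits the symmetry $S(\pi - \phi) = l - S(\phi)$. This follows by ODE uniqueness, since $\phi \mapsto l - S(\pi - \phi)$ and $S$ both solve \eqref{S ODE} with the common initial value $S(0) = 0$, using $\sin(\pi - \phi) = \sin \phi$ and $\alpha(l - S(\pi - \phi)) = \alpha(S(\pi - \phi))$.

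With this symmetry of $S$ in hand, the rotating configuration is exhibited by fixing any $r_1 \in (0,\infty) \setminus \{1\}$ and setting $r_2 = 1/r_1$. Under the inverse stereographic projection \eqref{cos phi}, the radii $r_1$ and $1/r_1$ correspond to latitudes $\phi_1$ and $\pi - \phi_1$, so $S(\phi_2) = l - S(\phi_1)$; combining this with the above symmetries of $\alpha$ and $\alpha'$ yields exactly $\alpha_1 = \alpha_2$ and $\alpha_1' = -\alpha_2'$. A short substitution into \eqref{eq for r general}, collapsing the $Q$-terms via \eqref{rQ}, confirms the identity, whereupon $\omega_0$ is defined as the common value appearing in the two lines of \eqref{rotating n vortex}. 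The only mild obstacle is making precise the hidden symmetry of $S$, which is not apparent from its implicit definition; once that uniqueness step is in place the remainder is direct algebra. The construction yields, as in the $n = 1$ case already recorded above, a one-parameter family indexed by the latitude of $C_+$.
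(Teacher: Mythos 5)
Your proposal follows essentially the same route as the paper: the existence claim is just the observation that the compatibility condition \eqref{eq for r general} collapses to an identity, via \eqref{rQ}, once $\alpha_1=\alpha_2$ and $\alpha_1'=-\alpha_2'$, and these two equalities come from placing the rings symmetrically about the $X$-$Y$ plane; $\omega_0$ is then read off from \eqref{rotating n vortex}. One step you lean on is, however, false as stated: the initial value problem $S'(\phi)\sin\phi=\alpha(S(\phi))$, $S(0)=0$, is singular at $\phi=0$ and does \emph{not} have a unique solution. Separating variables, every solution satisfies $\int_{l/2}^{S}\frac{ds}{\alpha(s)}=\ln\tan(\phi/2)+c$, and since both sides diverge at the endpoints, \emph{every} member of this one-parameter family (corresponding to the conformal rescalings $z\mapsto\lambda z$ of the plane) satisfies $S(0)=0$ and $S(\pi)=l$. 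Consequently $S(\pi-\phi)=l-S(\phi)$, and hence your identification of the reflected vortex with the radius $r_2=1/r_1$, holds only for the normalized solution with $S(\pi/2)=l/2$. The gap is harmless: either impose that normalization (which one may do without loss of generality), or bypass the issue entirely as the paper implicitly does --- choose the two rings at arc-length parameters $s_1$ and $l-s_1$ on $\cal M$, note that $S$ is a strictly increasing bijection so the corresponding planar radii are distinct, and get $\alpha_1=\alpha_2$, $\alpha_1'=-\alpha_2'$ directly from $\alpha(l-s)=\alpha(s)$ without ever needing the exact relation between $r_1$ and $r_2$. With that repair your argument is complete and matches the paper's.
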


\section{Rotating Solutions to Gross-Pitaevskii}
In this section we will follow the basic methodology of \cite{GS} to show the existence of a rotating solution to \eqref{GP} having $2n$ vortices whose motion in the small $\varepsilon$ limit is governed by the rotating $2n$-vortex solution to \eqref{ODE 2} discussed in the previous section. For the rest of the article, we assume that $\cal M$ is symmetric about the $X$-$Y$ plane and parametrize it using the projection $\mathbf P^\mu: {\cal S}^2 \rightarrow \cal M$ defined through \eqref{P mu} and \eqref{S ODE}. Then the metric on $\cal M$ is given by
\begin{equation}
    g_\mu = e^{2 \mu} [\sin^2(\phi) d\theta^2 + d\phi^2],
\end{equation}
where
\[
    e^{2 \mu} = \frac{\alpha^2(S(\phi))}{\sin^2(\phi)},
\]
and S satisfies \eqref{S ODE}.
Given $p \in \cal M$, we denote by $\tilde{p}$ its projection on ${\cal S}^2$ via the inverse of $\mathbf P^\mu$. Let $\hat{p} \in \cal M$ be the reflection of $p$ about the $X$-$Y$ plane and $\hat{\tilde{p}} \in {\cal S}^2$ be the reflection of $\tilde{p}$ about the equator.
The symbols $\nabla$ and $\Delta$ refer to the gradient and the Laplace-Beltrami operator associated with the metric $g_\mu$. For $p \in \cal M$, the Green's function $G_{\cal M}:
{\cal M} \setminus \{p\} \rightarrow {\mathbb R}$ is the solution to
\[
    \Delta G_{\cal M} = \delta_{p} - \frac{1}{V_{\cal M}}.
\]
Here $V_{\cal M}$ is the
volume of $\cal M$ and the Laplace-Beltrami operator on $\cal M$ is
\[
    \Delta = e^{-2 \mu} \Delta_{{\cal S}^2},
\]
From Lemma 4.4 in \cite{S}, $G_{\cal M}$ can be expressed in terms of the Green's function on ${\cal S}^2$:
\begin{equation} \label{Green M}
    G_{\cal M}(x,p) = G_{{\cal S}^2}(\tilde{x},\tilde{p}) - \frac{1}{V_{\cal M}}
    q(\tilde{x}) + constant,
\end{equation}
where $q(\tilde{x})$ satisfies
\begin{equation}
    \Delta_{{\cal S}^2} q = e^{2 \mu} - \frac{1}{4 \pi} \int_{{\cal S}^2} e^{2\mu}. \notag
\end{equation}
For our purposes, all that is important here is that $q$ is a smooth function on $\cal M$.
Note that
\[
    G_{{\cal S}^2}(\tilde{x},\tilde{p}) = \frac{1}{2 \pi} \ln |\tilde{x} - \tilde{p}|,
\]
where $|\tilde{x} - \tilde{p}|$ is the chordal distance between $x$ and $p$. Hence for any fixed $p_1$, $p_2 \in {\cal M}$, the function $\Phi_0: {\cal M} \setminus \{p_1, p_2\} \rightarrow \mathbb R$ defined by
\begin{equation}\label{Def Phi_0}
    \Phi_0 (x) = \ln |\tilde x -  \tilde p_1| - \ln | \tilde x - \tilde p_2|
\end{equation}
satisfies
\[
    \Delta \Phi_0 = \delta_{p_1} - \delta_{p_2}.
\]
The following lemma will be used to construct a sequence of competitors when minimizing the Ginzburg-Landau energy.
\begin{lem}[cf. Lemma 3.1 of \cite{GS}] \label{Prop chi}
Consider $p_1, p_2 \in \cal M$ and fix $x_0 \in {\cal M} \setminus \{ p_1, p_2\}$.
Define $\chi : {\cal M} \setminus \{p_1, p_2\} \rightarrow \mathbb R$ by
\begin{equation}\label{chi}
\chi (x)=\int_\gamma \langle \nabla^\perp \Phi_0, \mathbf t \rangle,
\end{equation}
where $\gamma$ is any piecewise smooth simple curve in ${\cal M} \setminus\{p_1, p_2\}$ from $x_0$ to $x$, $\mathbf t$ is the unit tangent vector to $\gamma$, and $\Phi_0$ is given by \eqref{Def Phi_0}. Then
\begin{itemize}
\item[ \emph{(i)}] $\chi$ is well-defined up to an integer multiple of $2\pi$ for every $\tilde{x} \in {\cal M} \setminus \{ p_1, p_2\}$.
\item[ \emph{(ii)}] For $j = 1,2$, if $(\theta_j, \rho)$ are geodesic polar coordinates around the point $p_j$ and $B^\mu_j(r)\subset \cal M$ is a geodesic ball of radius $r$ centered at $p_j$, then $| \nabla (\theta_j-\chi) |={\cal O} (1)$ in $B^\mu_j(r)$ as $r \rightarrow 0$.
\item[ \emph{(iii)}] For any $p_1 \in \cal M$ not lying on the $X$-$Y$ plane, take $p_2=\hat{p}_1$ in \eqref{Def Phi_0}. Then, up to integer multiples of $2\pi$, $\chi$ is also symmetric with respect to the $X$-$Y$ plane.
\end{itemize}
\end{lem}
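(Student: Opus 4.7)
The plan is to view $\chi$ as the multivalued harmonic conjugate of $\Phi_0$ on the twice-punctured surface $\mathcal{M} \setminus \{p_1, p_2\}$: the integrand $\langle \nabla^\perp \Phi_0, \mathbf{t}\rangle\, ds$ is precisely the 1-form $\omega := \star\, d\Phi_0$ dual to $d\Phi_0$ via rotation by $\pi/2$, and $\chi$ is a primitive of $\omega$. For (i), since $\Phi_0$ is harmonic off $\{p_1, p_2\}$, the 1-form $\omega$ is closed there and $\chi(x)$ depends only on the homotopy class of $\gamma$. Because $\mathcal{M}$ is simply connected, $\pi_1(\mathcal{M} \setminus \{p_1, p_2\}) \cong \mathbb{Z}$, generated by a small loop around $p_1$. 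Working in an isothermal coordinate $\zeta$ vanishing at $p_j$ (inherited from stereographic projection on $\mathcal{S}^2$ via the conformal chart $\mathbf{P}^\mu$), one has $\Phi_0 = \pm \ln|\zeta| + h(\zeta)$ with $h$ smooth; the harmonic conjugate of $\ln|\zeta|$ is $\arg\zeta$, whose period around $0$ is $2\pi$, while $\star\, dh$ has zero period. Hence every period of $\omega$ lies in $2\pi\mathbb{Z}$ and $\chi$ is well-defined modulo $2\pi$.

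For (ii), in the same local coordinate one integrates to obtain $\chi = \pm \arg\zeta + \tilde h(\zeta) + \mathrm{const}$ mod $2\pi$, where $\tilde h$ is a smooth primitive of $\star\, dh$. A second-order expansion of the exponential map at $p_j$ shows that geodesic polar $(\rho,\theta_j)$ and isothermal $\zeta$ are related by $\zeta = e^{-\mu(p_j)}\rho\, e^{i\theta_j} + O(\rho^2)$, after aligning the initial orthonormal frame appropriately, so $\theta_j - \arg\zeta$ extends smoothly across $p_j$ with bounded gradient. Combining these gives $\nabla(\theta_j - \chi) = \nabla(\text{smooth})$ on a neighborhood of $p_j$, hence $O(1)$. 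The main obstacle I expect lies here: one must verify that the comparison between geodesic-polar and isothermal angle extends with bounded (not merely finite) gradient, and one must orient the geodesic-polar frame at $p_2$ compatibly with the $-\ln$ term in $\Phi_0$ so that the sign in $|\nabla(\theta_j - \chi)| = O(1)$ is correct for both $j$.

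For (iii), let $R: \mathcal{M} \to \mathcal{M}$ be reflection about the $X$-$Y$ plane; by the symmetry hypothesis $R$ is an orientation-reversing isometry and $R p_1 = p_2$. Since the Euclidean reflection $\tilde R$ on $\mathbb{R}^3$ preserves chordal distances on $\mathcal{S}^2$ and swaps $\tilde p_1 \leftrightarrow \tilde p_2$, a direct computation gives $\Phi_0 \circ R = -\Phi_0$. Orientation-reversal yields $R^* \star = -\star R^*$ on 1-forms, whence
\[
R^* \omega = R^*(\star\, d\Phi_0) = -\star\, d(\Phi_0 \circ R) = -\star\, d(-\Phi_0) = \omega.
\]
Choose $x_0 \in \mathcal{M} \cap \{Z = 0\}$, which is nonempty by symmetry and disjoint from $\{p_1, p_2\}$ by hypothesis, so that $R x_0 = x_0$. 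For any path $\gamma$ from $x_0$ to $x$, the reflected path $R\gamma$ runs from $x_0$ to $Rx$, and a change of variables yields $\chi(Rx) = \int_{R\gamma} \omega = \int_\gamma R^* \omega = \int_\gamma \omega = \chi(x)$ modulo $2\pi$.
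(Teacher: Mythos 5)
Your argument is correct, and it reaches the same conclusions by a somewhat different route. For (i) the paper does not speak of periods of the closed form $\star\,d\Phi_0$; it takes two curves bounding a region $D$, applies the divergence theorem on $D$ minus a small geodesic ball, and computes the flux of $\nabla\Phi_0$ through $\partial B^\mu_1(r)$ to be $2\pi+{\cal O}(r)$ using the expansion $\ln|\tilde x-\tilde p_1|=\ln\rho-2\mu(\tilde p_1)+{\cal O}(\rho)$. That is the same computation as your period calculation, just in divergence-theorem clothing; your phrasing via $\pi_1({\cal M}\setminus\{p_1,p_2\})\cong\mathbb Z$ is cleaner conceptually but you should note that the local remainder $h=\Phi_0\mp\ln|\zeta|$ is in fact \emph{harmonic} (bounded harmonic in a punctured disk, hence extendable), not merely smooth --- closedness of $\star\,dh$, and hence the vanishing of its period, needs harmonicity. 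For (ii) the routes genuinely diverge: the paper never compares isothermal and geodesic-polar angles. It works entirely in geodesic polar coordinates, writes $\nabla\chi=\nabla^\perp\Phi_0=\bigl(\tfrac1\rho+{\cal O}(1)\bigr)\mathbf e_\theta+\cdots$ directly from the radial expansion of $\Phi_0$, and subtracts $\nabla\theta=\tfrac{1}{\sqrt{G}}\mathbf e_\theta$ using $\sqrt{G}=\rho+{\cal O}(\rho^3)$. This sidesteps exactly the step you flag as your main obstacle (the second-order comparison $\zeta=e^{-\mu(p_j)}\rho e^{i\theta_j}+{\cal O}(\rho^2)$ of the exponential map with the conformal chart), because only the modulus of $\tilde x-\tilde p_j$, not its argument, needs to be expanded; if you keep your route you do have to carry out that comparison. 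Your caveat about orienting the frame at $p_2$ is well taken --- the paper dismisses $p_2$ with ``a similar argument applies,'' and the sign only works out if $\theta_2$ is oriented consistently with the degree $-1$ singularity. For (iii) the paper simply observes that $\tilde p_1,\tilde p_2$ are symmetric about the equator and cites the argument of Gelantalis--Sternberg; your self-contained proof via $\Phi_0\circ R=-\Phi_0$, $R^*\star=-\star R^*$ for the orientation-reversing isometry, and a base point on the fixed circle is a complete and correct replacement for that citation.
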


\begin{proof}
(i) Consider piecewise smooth simple curves $\gamma$ and $\gamma'$ from $x_0$ to $x$. Without loss of generality, we may assume that they do not intersect. Let $D \subset {\cal M}$ such that $\partial D = \gamma - \gamma'$. It is easy to see that if $D \bigcap \{p_1, p_2\} =\O$,
\[
    \int_{\gamma} \langle \nabla^\perp \Phi_0 , \mathbf t \rangle - \int_{\gamma'} \langle \nabla^\perp \Phi_0 , \mathbf t' \rangle =0\\
\]
Suppose that $D \bigcap \{p_1, p_2\} = \{p_1\}$. For $r>0$ such that $r$ is small enough, we have
\begin{align}
    0 & = \int_{D \setminus B^\mu_1(r)} \Delta \Phi_0 \notag \\
        & = \int_{\gamma} \langle \nabla \Phi_0 , \nu \rangle - \int_{\gamma'} \langle \nabla \Phi_0 , \nu \rangle - \int_{\partial B^\mu_1 (r)} \langle \nabla \Phi_0 , \nu \rangle \notag \\
         & = \int_{\gamma} \langle \nabla^\perp \Phi_0 , \mathbf t \rangle- \int_{\gamma'} \langle \nabla^\perp \Phi_0 , \mathbf t' \rangle- \int_{\partial B^\mu_1 (r)} \langle \nabla \Phi_0 , \nu \rangle. \notag
\end{align}
Thus
\begin{align} \label{welldef}
    \int_{\gamma} \langle \nabla^{\perp}  \Phi_0 , \mathbf t \rangle & - \int_{\gamma'} \langle \nabla^{\perp}  \Phi_0 , \mathbf t' \rangle
        = \int_{\partial B^{\mu}_1 (r)} \langle \nabla \Phi_0 , \nu \rangle \notag \\
        & = \int_{\partial B_1 (r)} \langle \nabla \ln |\tilde x - \tilde p_1| , \nu \rangle - \int_{B_1 (r)} \Delta \ln | \tilde x- \tilde p_2|,
\end{align}
where $B_1 (r) = (P^\mu)^{-1} (B^\mu_1 (r)) \subset {\cal S}^2$. Using \eqref{Green M} and \eqref{pde q} we obtain
\begin{align} \label{int for term2}
    \int_{B_1 (r)} \Delta \ln |\tilde x- \tilde p_2| = - \frac{1}{2} \int_{B_1 (r)} e^{-2 \mu} = {\cal O}(r).
\end{align}
The local geodesic polar coordinates around $p_1$ on $\cal M$ is given as
\begin{equation} \label{geo polar coord}
    G(\theta, \rho) d\theta^2 +  d \rho^2,
\end{equation}
where $G$ is a smooth function satisfies
\begin{equation} \label{lim G}
\lim_{\rho \rightarrow 0} \frac{G(\theta,\rho)}{\rho^2} =1.
\end{equation}
Now if $\rho=\rho (x)$ is the geodesic distance from $p_1$ to $x$, we may write
\begin{equation} \label{approx ln}
\ln |\tilde x - \tilde p_1| = \ln \rho -2\mu(\tilde p_1) + \cal O(\rho).
\end{equation}
Then applying the geodesic polar coordinates in $B^\mu_1 (r)$ we have
\begin{align} \label{int for term1}
   \int_{\partial B_1 (r)} \langle \nabla \ln |\tilde x - \tilde p_1| , \nu \rangle = &  \int_{\partial B^\mu_1 (r)} \langle \nabla [\ln \rho -2\mu(p_1) + O(\rho) ] , \nu \rangle \notag \\
        = & 2\pi + {\cal O} (r).
\end{align}
Combining \eqref{welldef}-\eqref{int for term1} and letting $r \rightarrow 0$ we deduce
\[
     \int_{\gamma} \langle \nabla^\perp \Phi_0 , \mathbf t \rangle - \int_{\gamma'} \langle \nabla^\perp \Phi_0 , \mathbf t' \rangle = 2\pi,
\]
i.e. $\chi$ is well-defined up to an integer multiple of $2\pi$ for every $x \in {\cal M} \setminus \{ p_1, p_2\}$.

(ii) Consider the geodesic polar coordinates around $p_1$ and denote $\theta_1$ by $\theta$. From \eqref{geo polar coord}, we have
\begin{align}
    \nabla \Phi_0 & = \frac{1}{G(\theta, \rho)} \frac{\partial \Phi_0}{\partial \theta} \partial_\theta + \frac{\partial \Phi_0}{\partial \rho} \partial_\rho \notag \\
     & = \frac{1}{\sqrt{G(\theta, \rho)}}  \frac{\partial \Phi_0}{\partial \theta} \mathbf e_\theta + \frac{\partial \Phi_0}{\partial \rho} \mathbf e_\rho.
\end{align}
Then
\[
    \nabla \chi = \nabla^\perp \Phi_0 = \left( \frac{1}{\rho} + {\cal O}(1) \right) \mathbf e_\theta - \frac{1}{\sqrt{G(\theta, \rho)}} {\cal O}(1) \mathbf e_\rho,
\]
and
\[
    \nabla \theta =  \frac{1}{G(\theta, \rho)} \partial_\theta = \frac{1}{\sqrt{G(\theta, \rho)}} \mathbf e_\theta.
\]
Thus from \eqref{lim G}, $|\nabla (\theta - \chi)| = {\cal O}(1)$ in $B^\mu_1 (r)$ as $r \rightarrow 0$. A similar argument applies to $B^\mu_2(r)$.

(iii) Since $p_1$ and $p_2$ on $\cal M$ are symmetric about the $X$-$Y$ plane, their projections $\tilde p_1$ and $\tilde p_2$ on ${\cal S}^2$ are also symmetric about the equator. Hence the argument in \cite{GS} is unchanged here.
\end{proof}

To obtain the convergence result as $\varepsilon \rightarrow 0$, we adapt the vortex-ball construction by Jerrard \cite{J} and Sandier \cite{S} to the setting on $\cal M$. Details of adjusting this technology to the setting of geodesic balls on a manifold can be found in Sec. 5 of \cite{CS}. Recall that for $\Omega \subset \cal M$ with smooth boundary $\partial \Omega$, the degree of a smooth function $u: \overline{\Omega} \rightarrow \mathbb C$ around $\partial \Omega$ is defined by
\[
    deg(u, \partial \Omega) = \frac{1}{2\pi} \int_{\partial \Omega} (iv, \partial_\tau v),
\]
where $u \neq 0$ on $\partial \Omega$, $v = \frac{u}{|u|}$, and $\tau$ is the unit tangent to $\partial \Omega$. We state below the adapted version from the corresponding result given in \cite{SS}, page 60.
\begin{lem} \label{ball construction}
Fix any $\zeta \in (0,1)$. Then there exists some
$\varepsilon_0(\zeta)>0$ such that for $0 < \varepsilon <
\varepsilon_0(\zeta)$, if $u: {\cal M} \rightarrow \mathbb C$
satisfies $E_\varepsilon(u) \leq \varepsilon^{\zeta-1}$, then there
is a finite collection of disjoint closed geodesic balls
$\{B^\mu_j(r_j)\}_{j=1}^{N_\varepsilon}$ so that
\begin{itemize}
\item[ \emph{(i)}] $\sum_{j=1}^{N_\varepsilon} r_j < C\varepsilon^{\frac{\zeta}{2}}$, where $C$ is a universal constant.
\item[ \emph{(ii)}] $\{x \in {\cal M}: ||u(\tilde{x})|-1| \geq \varepsilon^{\frac{\zeta}{4}} \} \subset \bigcup_{j=1}^{N_\varepsilon} B^\mu_j$.
\item[ \emph{(iii)}] We have
    \[
        \int_{\bigcup_{j=1}^{N_\varepsilon} B^\mu_j} \frac {|\nabla u|^2}{2} + \frac{(1-|u|^2)^2}{4 \varepsilon^2} \geq \pi D \left[ \left( 1-\frac{\zeta}{2} \right) |\ln \varepsilon| - \ln D - C \right],
    \]
where $D := \sum_{j=1}^{N_\varepsilon} |d_j|$ is assumed to be nonzero, and $d_j = deg(u, \partial B^\mu_j)$.
\item[ \emph{(iv)}] $D \leq C \frac{E_\varepsilon(u)}{\zeta |\ln \varepsilon|}$ with $C$ a universal constant.
\end{itemize}
\end{lem}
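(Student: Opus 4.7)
The plan is to reduce the statement to the classical planar Jerrard--Sandier ball construction (cf.\ \cite{SS}, page 60) via local conformal charts, since the only new feature is that the balls are geodesic on $\cal M$ rather than Euclidean. Because $\cal M$ is compact and the conformal factor $e^{2\mu}$ is smooth, we can cover $\cal M$ by finitely many conformal coordinate patches on each of which $e^{2\mu}$ (replaced near the poles $\phi = 0, \pi$ by the planar parametrization with factor $e^{2f}$ of Section 2, so as to avoid the degeneracy of $\sin^2\phi$) is uniformly bounded above and below by positive constants. On each patch, small geodesic balls are bi-Lipschitz to Euclidean disks with distortion arbitrarily close to $1$, and the Ginzburg--Landau energy on $\cal M$ differs from its planar analogue only by multiplicative constants depending on the uniform bounds of $\mu$.

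Given this reduction, I would apply the classical Jerrard--Sandier construction in each coordinate patch, using the hypothesis $E_\varepsilon(u) \le \varepsilon^{\zeta - 1}$ to obtain, inside each patch, a finite family of Euclidean disks that cover the local part of the bad set $\{\,||u| - 1| \ge \varepsilon^{\zeta/4}\,\}$, have total radius $O(\varepsilon^{\zeta/2})$, and satisfy the sharp local lower bound $\int_B \tfrac{1}{2}|\nabla u|^2 + \tfrac{1}{4\varepsilon^2}(1 - |u|^2)^2 \ge \pi |d|\bigl[(1 - \tfrac{\zeta}{2})|\ln\varepsilon| - \ln|d| - C\bigr]$. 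I would then interpret these Euclidean disks as geodesic balls on $\cal M$ and glue the local families across chart overlaps by the standard growing-and-merging procedure adapted to manifolds as in \cite{CS}, Sec.\ 5: whenever two balls intersect, they are replaced by a single geodesic ball whose degree is the sum of the two degrees and whose radius grows by a fixed factor. This procedure terminates in finitely many steps, yielding a disjoint collection $\{B^\mu_j(r_j)\}_{j=1}^{N_\varepsilon}$; items (i) and (ii) follow directly, and (iii) is obtained by summing the local lower bounds and invoking $\sum_j |d_j|\ln|d_j| \le D \ln D$.

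Property (iv) is then immediate from (iii) together with the hypothesis: solving $\pi D[(1 - \tfrac{\zeta}{2})|\ln\varepsilon| - \ln D - C] \le E_\varepsilon(u)$ for $D$ and using $\ln D = o(|\ln\varepsilon|)$ gives $D \le C\, E_\varepsilon(u)/(\zeta |\ln\varepsilon|)$ for $\varepsilon$ small enough relative to $\zeta$. The main obstacle is not conceptual but bookkeeping: one must verify that the distortion of $e^{2\mu}$, and the transition between the spherical and planar parametrizations near the poles, contributes only $o(|\ln\varepsilon|)$ error to the local energy lower bounds, so that the sharp coefficient $\pi(1 - \tfrac{\zeta}{2})$ in front of $|\ln\varepsilon|$ is preserved. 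This is possible precisely because one can choose the coordinate patches (and the initial disks) so small that the conformal factor is nearly constant on each one, and because the geodesic distance and Euclidean distance agree to leading order on such small balls.
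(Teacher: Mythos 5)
The paper does not actually prove Lemma \ref{ball construction}: it is stated as an ``adapted version'' of the result on page 60 of \cite{SS}, with the adaptation to geodesic balls on a manifold delegated to Sec.\ 5 of \cite{CS}. So there is no in-paper argument to match yours against; what you have written is an outline of the adaptation that the paper takes for granted. As such an outline it is broadly consistent with the cited references, and one point is even cleaner than you make it: in two dimensions the Dirichlet term is conformally invariant, so in a conformal chart the gradient part of the energy transfers with no constant at all, and only the potential term picks up the factor $e^{2\mu}$, which amounts to replacing $\varepsilon$ by a comparable $\tilde\varepsilon\in[c\varepsilon,C\varepsilon]$ and hence perturbs $|\ln\varepsilon|$ only by $O(1)$. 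That is the real reason the sharp coefficient $\pi(1-\zeta/2)$ survives.

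There is, however, one genuine gap: your derivation of (iv) from (iii) is circular. You solve $\pi D[(1-\tfrac{\zeta}{2})|\ln\varepsilon|-\ln D-C]\le E_\varepsilon(u)$ for $D$ ``using $\ln D=o(|\ln\varepsilon|)$,'' but that smallness of $\ln D$ is exactly what needs to be proved; under the hypothesis $E_\varepsilon(u)\le\varepsilon^{\zeta-1}$ nothing rules out a priori that $D$ is so large that the bracket in (iii) is negative, in which case (iii) is vacuous and yields no bound on $D$ at all (for $\zeta$ close to $1$ the case $D\sim\varepsilon^{-(1-\zeta/2)/2}$ is not excluded by (iii) plus the energy hypothesis). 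In \cite{SS} the bound (iv) is not a corollary of (iii); it is extracted from the ball-growth process itself, where each unit of degree costs at least $c\,\zeta|\ln\varepsilon|$ of energy over the range of scales through which the balls are grown. A secondary, more minor imprecision: in the merging step the combined ball is taken with radius equal to the sum of the radii of the balls it absorbs (so that total radius and the annular lower bounds are preserved), not with a radius that ``grows by a fixed factor''; getting this bookkeeping right is precisely where the $-\ln D$ correction in (iii) comes from.
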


We will first show the existence of a rotating solution to the Gross-Pitaevskii equation having two vortices whose motion, as $\varepsilon$ approaches to $0$,  converges to the uniform rotating 2-vortex solution of the point-vortex problem \eqref{ODE 2}. In order to obtain a rotating solution of \eqref{GP}, we plug the
ansatz $U = u(R(\omega_\varepsilon t) x)$ into the equation, where $u: {\cal M} \rightarrow {\mathbb C}$,
$\omega_\varepsilon \in \mathbb R$ and $R(\Theta)$ is the rotation
matrix about the $Z$-axis given by
\begin{equation}
    \left(\begin{array}{ccc}
                 \cos \Theta & \sin \Theta & 0 \\
                - \sin \Theta & \cos \Theta & 0 \\
                0 & 0 & 1
                \end{array} \right) \mbox{ for } \Theta \in \mathbb R.
\end{equation}
Then $u$ must solve
\begin{equation} \label{GP2}
    - i \omega_\varepsilon \langle \nabla u , \tau \rangle = \Delta u + \frac{1}{\varepsilon} (1 - |u|^2) u,
\end{equation}
where for $p = (X,Y, Z) \in \cal M$, $\tau(p) = (-Y, X, 0) = \partial_\theta$. In fact, \eqref{GP2} is the Euler-Lagrange equation with $\omega_\varepsilon$ arising as a Lagrange multiplier for the following constrained minimization problem:
\begin{equation} \label{MP}
    \mbox{Minimize the Ginzburg-Langau energy } E_\varepsilon (u) \mbox{ for } u \in {\cal S}_{p_\varepsilon}
\end{equation}
where the admissible set is given by
\[
    {\cal S}_{p_\varepsilon} = \left \{ u \in H^1({\cal M},{\mathbb C}):P(u) = p_\varepsilon, \mbox{ and } u(x) = u(\hat{x}) \mbox{ for all } x \in {\cal M}  \right \},
\]
and the momentum $P$ of $u$ is defined as
\[
    P(u) = \mbox{Im} \int_{\cal M} u^* \langle \nabla u , \tau \rangle.
\]
The existence of a minimizer to \eqref{MP} can be shown by the direct method (\cite{GS}, Proposition 4.3), provided ${\cal S}_{p_\varepsilon}$ is nonempty and this minimizer $u_\varepsilon \in {\cal S}_{p_\varepsilon}$ will satisfy \eqref{GP2}.
\\

Recall that $\cal M$ is a surface of revolution generated by a curve
\[
    \gamma(s) = (\alpha(s), 0, \beta(s))
\]
with arc length $l$. In the next lemma we will, through constructions, prove that given a value $p$ defined in terms of $\alpha$ and $l$, there is a sequence of minimizers satisfying a certain energy bound with momenta $p_\varepsilon$ converging to $p$.
\begin{lem} \label{construct v}
Fix $s_1 \in (0, \frac{l}{2})$ and let $p = 2 \pi \int_{s_1}^{l-s_1}
\alpha(s) ds$. Then there exists a sequence
$\{p_\varepsilon\}_{\varepsilon >0}$ converging to $p$ as
$\varepsilon \rightarrow 0$ and a corresponding sequence of
minimizers  $\{u_\varepsilon\}$ of $E_\varepsilon$ in ${\cal
S}_{p_\varepsilon}$ such that $E_\varepsilon (u_\varepsilon) \leq 2
\pi | \ln \varepsilon| + {\cal O} (1)$.
\end{lem}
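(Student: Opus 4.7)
The plan is to exhibit an explicit admissible competitor $v_\varepsilon$ with the stated energy bound and momentum tending to $p$, so that ${\cal S}_{p_\varepsilon}$ is nonempty for $p_\varepsilon:=P(v_\varepsilon)$ and the direct method (Proposition 4.3 of \cite{GS}) produces a minimizer whose energy is no larger.

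Take $p_1\in{\cal M}$ to be the point at arc-length $s=s_1$ and longitude $\theta=0$ on $\gamma$, and set $p_2=\hat p_1$, which lies at $s=l-s_1$. With these choices in \eqref{Def Phi_0}, let $\chi$ be the multi-valued phase from Lemma \ref{Prop chi}, and define
\[
    v_\varepsilon(x) = \eta_\varepsilon\bigl(\rho_1(x)\bigr)\,\eta_\varepsilon\bigl(\rho_2(x)\bigr)\, e^{i\chi(x)},
\]
where $\rho_j(x)$ is the geodesic distance from $x$ to $p_j$ and $\eta_\varepsilon$ is a smooth cutoff with $\eta_\varepsilon(t)=t/\varepsilon$ for $t\le\varepsilon$ and $\eta_\varepsilon(t)=1$ for $t\ge 2\varepsilon$. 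Part (iii) of Lemma \ref{Prop chi} makes $e^{i\chi}$ a single-valued function invariant under $x\mapsto\hat x$, and since $\rho_1(\hat x)=\rho_2(x)$ the product of cutoffs is invariant as well, so $v_\varepsilon\in H^1({\cal M},\mathbb C)$ with $v_\varepsilon(x)=v_\varepsilon(\hat x)$, placing $v_\varepsilon$ in ${\cal S}_{p_\varepsilon}$. For the energy I split ${\cal M}$ into the two cores $B^\mu_j(\varepsilon)$, the annuli $B^\mu_j(r_0)\setminus B^\mu_j(\varepsilon)$ for some fixed small $r_0$, and the complement. On the complement $|\nabla\chi|=|\nabla\Phi_0|$ is bounded and $|v_\varepsilon|=1$, contributing ${\cal O}(1)$. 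On each annulus, part (ii) of Lemma \ref{Prop chi} together with the geodesic-polar expansion of the metric reduces $|\nabla\chi|^2/2$ to $|\nabla\theta_j|^2/2+{\cal O}(1/\rho_j)$, whose integral in geodesic polars is $\pi|\ln\varepsilon|+{\cal O}(1)$. On each core, scaling of the profile $\eta_\varepsilon$ gives ${\cal O}(1)$ for both gradient and potential terms. Summing produces $E_\varepsilon(v_\varepsilon)\le 2\pi|\ln\varepsilon|+{\cal O}(1)$.

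The crux is to verify $P(v_\varepsilon)\to p$. The contribution from the two core balls is ${\cal O}(\varepsilon)$, while on the complement $v_\varepsilon=e^{i\chi}$ and the momentum density reduces to $\langle\nabla\chi,\tau\rangle$. Working in intrinsic coordinates $(\theta,s)$ in which $g_{\cal M}=ds^2+\alpha(s)^2\,d\theta^2$, one has $\tau=\partial_\theta$, $dA=\alpha(s)\,ds\,d\theta$, and $\langle\nabla\chi,\tau\rangle=\chi_\theta$, so
\[
    \int_{\cal M}\langle\nabla\chi,\tau\rangle\,dA = \int_0^l \alpha(s)\Bigl(\int_0^{2\pi}\chi_\theta(\theta,s)\,d\theta\Bigr)ds.
\]
The inner integral is the monodromy of $\chi$ around the parallel circle at arc-length $s$, equal to $2\pi$ times the signed sum of vortex degrees enclosed in the cap $\{s'<s\}$. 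For $s<s_1$ no vortex is enclosed, and for $s>l-s_1$ both are enclosed with opposite degrees summing to zero; for $s_1<s<l-s_1$ only $p_1$ is enclosed and the monodromy is $2\pi$. Hence the integral evaluates to $2\pi\int_{s_1}^{l-s_1}\alpha(s)\,ds=p$. The main technical obstacle is thus identifying this monodromy integral and absorbing the cutoff-region errors, which will follow from the bounds in Lemma \ref{Prop chi}. This gives $p_\varepsilon\to p$ and completes the construction.
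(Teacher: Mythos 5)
Your proposal is correct and follows essentially the same strategy as the paper: build an explicit symmetric competitor with a $\pm 1$ vortex pair at $p_1$ and $\hat p_1$ carrying the phase $\chi$ of Lemma \ref{Prop chi}, bound its energy by $2\pi|\ln\varepsilon|+{\cal O}(1)$ using the logarithmic divergence near the two cores, and compute the momentum via the monodromy of $\chi$ along the parallels $C_s$, then set $p_\varepsilon=P(v_\varepsilon)$ and invoke the direct method. The only difference is cosmetic: you damp the modulus by a radial cutoff and keep the phase $e^{i\chi}$ everywhere, whereas the paper uses the core profile $\frac{\rho}{\varepsilon}e^{i\theta}$ and interpolates between $\theta$ and $\chi$ on a thin annulus $B^\mu_1(\varepsilon+\varepsilon^2)\setminus B^\mu_1(\varepsilon)$; both yield the same bounds.
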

\begin{proof}
It is sufficient to construct a sequence of functions $\{ v_\varepsilon \} \subset H^1({\cal M}; {\mathbb C})$ such that each $v_\varepsilon$ is symmetric about the $X$-$Y$ plane, $E_\varepsilon (v_\varepsilon)$ satisfies the desired logarithmic bound and
\[
    P(v_\varepsilon) \rightarrow p \quad \mbox{as} \quad \varepsilon \rightarrow 0.
\]
Note that a minimizer always exists for an nonempty ${\cal S}_{p_\varepsilon}$. Then taking $p_\varepsilon = P(v_\varepsilon)$ the lemma is proved. The construction of $\{v_\varepsilon\}$ is based on Proposition 4.4 in \cite{GS}. Given $s_1 \in (0, \frac{l}{2})$, let $x_1 = (\alpha(s_1), 0, \beta(s_1)) \in {\cal M}$ and $B^\mu_1(r)$ and $\hat{B}^\mu_1(r)$ be the geodesic balls with radius $r$ centered at $x_1$ and $\hat x_1$. Fix $\varepsilon>0$ small enough such that $B^\mu_1(\varepsilon + \varepsilon^2) \bigcap \hat{B}^\mu_1(\varepsilon + \varepsilon^2) = \O$ and we may define $w_\varepsilon$ with the local geodesic polar coordinates $(\theta, \rho)$ around $x_1$ through
\begin{equation}
    w_\varepsilon (x) =
            \left\{\begin{array}{ll}
                 \frac{\rho}{\varepsilon} e^{i \theta} & \mbox{if } x \in B^\mu_1(\varepsilon) \\
                e^{i (\frac{\varepsilon + \varepsilon^2 - \rho}{\varepsilon^2}\theta + \frac{\rho - \varepsilon}{\varepsilon^2}\chi)} & \mbox{if } x \in  B^\mu_1(\varepsilon + \varepsilon^2) \setminus B^\mu_1(\varepsilon),
            \end{array} \right.
\end{equation}
where $\chi: {\cal M} \setminus \{x_1, \hat x_1\} \rightarrow \mathbb R$ is given by \eqref{chi} and $\theta$ is chosen so that $\theta = \chi$ at some points on $\partial  B^\mu_1(\varepsilon)$. Now we set

\begin{equation}
    v_\varepsilon (x) =
            \left\{\begin{array}{ll}
                 e^{i \chi} & \mbox{if } x \in {\cal M} \setminus (B^\mu_1(\varepsilon + \varepsilon^2) \bigcup \hat{B}^\mu_1(\varepsilon + \varepsilon^2)) \\
                w_\varepsilon (x) & \mbox{if } x \in B^\mu_1(\varepsilon + \varepsilon^2) \\
                w_\varepsilon (\hat{x}) & \mbox{if } x \in \hat{B}^\mu_1(\varepsilon + \varepsilon^2)
            \end{array} \right.
\end{equation}
From Lemma \ref{Prop chi}, $v_\varepsilon$ is well-defined and symmetric about the plane $Z = \beta(\frac{l}{2})$.\\
\\
{\bf - Estimate of $E_\varepsilon (v_\varepsilon)$}\\
Let $r = \varepsilon + \varepsilon^2$. Using the fact that $\Delta \Phi_0 =0 $ in ${\cal M} \setminus B^\mu_1(r) \bigcup \hat B^\mu_1(r)$ we derive
\begin{align} \label{e outside balls}
    \int_{{\cal M} \setminus B^\mu_1(r) \bigcup \hat B^\mu_1(r)} |\nabla v_\varepsilon|^2 = & \int_{{\cal M} \setminus B^\mu_1(r) \bigcup \hat B^\mu_1(r)} |\nabla^\perp \Phi_0|^2 \notag \\
    = & \int_{{\cal M} \setminus B^\mu_1(r) \bigcup \hat B^\mu_1(r)} |\nabla \Phi_0|^2 \notag \\
    = & - \int_{\partial B^\mu_1(r)} \Phi_0 \langle \nabla \Phi_0, \nu \rangle -  \int_{\hat B^\mu_1(r)} \Phi_0 \langle \nabla \Phi_0, \nu \rangle.
\end{align}
From \eqref{Def Phi_0} we have
\begin{align}
    - & \int_{\partial B^\mu_1(r)} \Phi_0 \langle \nabla \Phi_0, \nu \rangle \notag \\
    = & -\int_{\partial B_1(r)} (\ln |\tilde x - \tilde x_1| -  \ln |\tilde x - \hat{\tilde x}_1|)( \langle \nabla \ln |\tilde x - \tilde x_1|, \nu \rangle -  \langle \nabla \ln |\tilde x - \hat{\tilde x}_1| , \nu \rangle) \notag.
\end{align}
Using \eqref{approx ln} for $x \in \partial B^\mu_1(r)$ we obtain
\begin{equation} \label{bdd int1}
\ln |\tilde x - \tilde x_1| = \ln r -2\mu(\tilde x_1) + {\cal O}(r),
\end{equation}
\begin{equation}
    \langle \nabla \ln |\tilde x - \tilde x_1| , \nu \rangle = \frac{1}{r} + {\cal O}(1).
\end{equation}
Moreover,
\begin{equation}
    \ln |\tilde x - \hat{\tilde x}_1| = \ln |\tilde x_1 - \hat{\tilde x}_1| - 2\mu(\tilde x_1)+ {\cal O}(r),
\end{equation}
\begin{equation} \label{bdd int4}
    \nabla \ln |\tilde x - \hat{\tilde x}_1| = {\cal O}(1).
\end{equation}
Then combining \eqref{bdd int1}-\eqref{bdd int4} gives
\begin{equation} \label{e outside balls 2}
     - \int_{\partial B^\mu_1(r)} \Phi_0 \langle \nabla \Phi_0, \nu \rangle = 2 \pi \ln r + 2\pi \ln |\tilde x_1 - \hat{\tilde x}_1| + {\cal O}(r).
\end{equation}
The integral over $\partial \hat B^\mu_1(r)$ is treated in a similar way. Note that $r = \varepsilon + \varepsilon^2$. Thus by \eqref{e outside balls} and \eqref{e outside balls 2} we have
\begin{equation} \label{E1}
    \frac{1}{2} \int_{{\cal M} \setminus B^\mu_1(r) \bigcup \hat B^\mu_1(r)} |\nabla v_\varepsilon|^2 = 2 \pi |\ln \varepsilon| + 2\pi \ln |\tilde x_1 - \hat{\tilde x}_1| + o(1).
\end{equation}
Next we calculate the energy contribution inside balls. For $x \in B^\mu_1(\varepsilon)$, $v_\varepsilon = w_\varepsilon$ and
\[
    \nabla w_\varepsilon = \frac{1}{\sqrt{G(\theta, \rho)}} \frac{i\rho}{\varepsilon} e^{i\theta} \mathbf e_\theta + \frac{1}{\varepsilon} e^{i \theta} \mathbf e_\rho,
\]
where $\lim_{\rho \rightarrow 0} \frac{G(\theta, \rho)}{\rho^2} =1$. Hence
\begin{align} \label{E2}
    \int_{B^\mu_1(\varepsilon)} & \frac{|\nabla v_\varepsilon|^2}{2} + \frac{(1-|v_\varepsilon|^2)^2}{4 \varepsilon^2} \notag \\
        & = \frac{1}{4\varepsilon^2}\int_0^{2\pi}\int_0^\varepsilon \left[ \frac{2 \rho^2}{G(\theta, \rho)}+2 + \left( 1 - \frac{\rho^2}{\varepsilon^2} \right)^2 \right] \sqrt{G(\theta, \rho)} d\rho d\theta \notag \\
        & = {\cal O}(1).
\end{align}
In $B^\mu_1(\varepsilon + \varepsilon^2) \setminus  B^\mu_1(\varepsilon)$,  we have
\[
    |\nabla v_\varepsilon|^2 = \frac{1}{\varepsilon^4} |(\chi - \theta) \nabla \rho + (\rho - \varepsilon) \nabla (\chi - \theta) + \varepsilon^2 \nabla \theta|^2.
\]
Since $\theta$ here is chosen so that $\theta = \chi$ at some points on $\partial B^mu_1(\varepsilon)$, by Lemma \ref{Prop chi}, $|\nabla (\chi - \theta)| = {\cal O} (1)$ and $|\chi - \theta| = {\cal O} (\varepsilon)$. Then
\begin{equation} \label{E3}
     \int_{B^\mu_1(\varepsilon + \varepsilon^2) \setminus  B^\mu_1(\varepsilon)} \frac{|\nabla v_\varepsilon|^2}{2} = {\cal O}(\varepsilon).
\end{equation}
A similar estimate also holds in $\hat B^\mu_1(\varepsilon + \varepsilon^2)$. Finally combining \eqref{E1}, \eqref{E2} and \eqref{E3} we obtain
\begin{equation} \label{E upper bdd}
    E_\varepsilon (v_\varepsilon) = 2 \pi |\ln \varepsilon| + {\cal O}(1).
\end{equation}
\\
{\bf - Estimate of $P(v_\varepsilon)$}\\
This part follows exactly as in \cite{GS} except the $\phi$-coordinate on ${\cal S}^2$ is replaced by the $s$-coordinate on $\cal M$. We include the argument for the sake of completeness. 
Let $C_s = \{ (\alpha(s) \cos \theta, \alpha(s) \sin \theta,
\beta(s)):\theta \in [0,2\pi] \} \subset \cal M$ be the circle
corresponding to the arc length value $s$. We decompose the momentum
$P(v_\varepsilon)$ into two parts:
\begin{equation}
    P(v_\varepsilon) = \mbox{Im} \int_{\{\bigcup C_s: s \not \in
    Z\}} v^*_\varepsilon \langle \nabla v_\varepsilon , \tau \rangle
    + \mbox{Im} \int_{\{\bigcup C_s: s \in
    Z\}} v^*_\varepsilon \langle \nabla v_\varepsilon , \tau
    \rangle,
\end{equation}
where
\[
    Z = \{ s \in [0,l]: C_s \bigcap (B^\mu_1(\varepsilon + \varepsilon^2) \bigcup \hat B^\mu_1(\varepsilon + \varepsilon^2))\neq
    \O\}.
\]
Note that $Z$ can by expressed as a union of two disjoint intervals,
\[
    Z = (s_1', s_1'') \bigcup (l-s_1'', l-s_1'),
\]
and in $\{\bigcup C_s: s \not \in Z\}$,
\[
    \mbox{Im } v^*_\varepsilon \langle \nabla v_\varepsilon , \tau \rangle = \frac{\partial \chi}{\partial \theta}.
\]
Therefore using the facts that
\[
    \int_0^{2\pi} \frac{\partial \chi}{\partial \theta}d\theta =0
    \mbox{ for } s \in (0,s_1') \bigcup (l - s_1', l),
\]
and
\[
    \int_0^{2\pi} \frac{\partial \chi}{\partial \theta}d\theta =
    2\pi
    \mbox{ for } s \in (s_1'',l - s_1''),
\]
we have
\begin{align}
    \mbox{Im} \int_{\{\bigcup C_s: s \not \in
    Z\}} v^*_\varepsilon \langle \nabla v_\varepsilon , \tau \rangle
    & = \int_{s_1''}^{l- s_1''} \alpha(s) \int_0^{2\pi}\frac{\partial \chi}{\partial
    \theta}d\theta ds \notag\\
    & = 2\pi \int_{s_1''}^{l-s_1''} \alpha(s)ds. \notag \\
    & = 2\pi \int_{s_1}^{l-s_1} \alpha(s)ds + {\cal O}(\varepsilon)
    \notag \\
    & = p + {\cal O}(\varepsilon).
\end{align}
For the second part of $P(\varepsilon)$, since $|\{\bigcup C_s: s
\in Z\}| = {\cal O}(\varepsilon)$, using \eqref{E upper bdd} we derive
\begin{align}
|\mbox{Im} \int_{\{\bigcup C_s: s \in Z\}} v^*_\varepsilon \langle \nabla v_\varepsilon , \tau \rangle| & \leq \int_{\{\bigcup C_s: s \in Z\}} |\nabla v_\varepsilon| \notag \\
    & \leq |\{\bigcup C_s: s \in Z\}|^{\frac{1}{2}} \left[\int_{\cal M} |\nabla v_\varepsilon|^2\right]^{\frac{1}{2}} \notag \\
    & = o(1).
\end{align}
Hence $P(v_\varepsilon) = p + o(1)$ and the lemma is proven.
\end{proof}
With Lemma \ref{ball construction} and Lemma \ref{construct v} we
may extend the following result stated in \cite{GS} to the setting
on $\cal M$.
\begin{thm} \label{main 1}
Let $p_\pm(t)$ be any rotating 2-vortex solution to \eqref{ODE 2} on $\cal M$ with circular orbits denoted by $C_\pm$ that are symmetric about the plane the $X$-$Y$ plane.
Then for all positive $\varepsilon$ sufficiently small, there exists
a solution to \eqref{GP} of the form
$U_\varepsilon (x,t) = u_\varepsilon(R(\omega_\varepsilon t)x)$ where $u_\varepsilon: {\cal M} \rightarrow \mathbb C$ minimizes \eqref{MP} rotating about the
$Z$-axis. Furthermore, there exists a finite collection of disjoint
balls ${\cal B}_\varepsilon$ in $\cal M$, including two balls
$B_\pm^\varepsilon$, such that
\begin{itemize}
\item[ \emph{(i)}] $u_\varepsilon$ is symmetric about the plane the $X$-$Y$ plane and $E_\varepsilon(u_\varepsilon) \leq 2\pi | \ln \varepsilon | + {\cal O} (1)$ as $\varepsilon \rightarrow 0$.
\item[ \emph{(ii)}] The balls $B_\pm^\varepsilon$ and their centers $p_\pm^\varepsilon$ are symmetric about the $X$-$Y$ plane, and their common radius $r_\varepsilon$ converges to zero as $\varepsilon \rightarrow 0$.
\item[ \emph{(iii)}] $deg(u_\varepsilon,\partial B_\pm^\varepsilon)=\pm 1$ and $deg(u_\varepsilon,\partial B)=0$ for any $B \in {\cal B}_\varepsilon \setminus \{ B_\pm^\varepsilon \}$.
\item[ \emph{(iv)}] $| u_\varepsilon (x)|>1/2$ for $x \in {\cal M} \setminus {\cal B}_\varepsilon$ and $| {\cal B}_\varepsilon |=o(1)$ as $\varepsilon \rightarrow 0$.
\item[ \emph{(v)}] The circular orbits $C_\pm^\varepsilon$ associated with the rotation of $p_\pm^\varepsilon$ approach $C_\pm$ as $\varepsilon \rightarrow 0$.
\end{itemize}
\end{thm}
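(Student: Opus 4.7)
The plan is to realize $U_\varepsilon$ as a minimizer of the constrained problem \eqref{MP} with momentum $p_\varepsilon$ tuned so that, in the $\varepsilon \to 0$ limit, the vortices are forced to lie on the prescribed orbits $C_\pm$. Let $s_1 \in (0, l/2)$ denote the arc-length value corresponding to $C_+$, so that $C_-$ sits at $l - s_1$ by the symmetry hypothesis, and set $p = 2\pi \int_{s_1}^{l-s_1} \alpha(s)\,ds$. First, I would invoke Lemma \ref{construct v} to produce $p_\varepsilon \to p$ together with admissible competitors satisfying $E_\varepsilon \leq 2\pi|\ln\varepsilon| + O(1)$. Since ${\cal S}_{p_\varepsilon}$ is then nonempty, the direct method (as in Proposition 4.3 of \cite{GS}) yields a minimizer $u_\varepsilon$ inheriting the reflection symmetry $u_\varepsilon(x) = u_\varepsilon(\hat x)$, and this minimizer satisfies \eqref{GP2} for some Lagrange multiplier $\omega_\varepsilon \in \mathbb{R}$. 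The rotating map $U_\varepsilon(x,t) = u_\varepsilon(R(\omega_\varepsilon t)x)$ then solves \eqref{GP}, establishing (i).

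Next, fixing a small $\zeta \in (0,1)$, I would apply Lemma \ref{ball construction} to $u_\varepsilon$; parts (i), (ii), and (iv) of that lemma directly yield a collection of disjoint geodesic balls ${\cal B}_\varepsilon$ with $|{\cal B}_\varepsilon| = o(1)$, radii tending to zero, and $|u_\varepsilon| \geq 1/2$ outside ${\cal B}_\varepsilon$, providing (iv) of the theorem. To count degrees I would match the upper bound on $E_\varepsilon(u_\varepsilon)$ against the lower bound $\pi D[(1 - \zeta/2)|\ln\varepsilon| - \ln D - C]$ from Lemma \ref{ball construction}(iii), forcing $D \leq 2$ for $\varepsilon$ small. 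The symmetry $u_\varepsilon(x) = u_\varepsilon(\hat x)$ pairs a ball of degree $d$ around any $p$ off the $X$-$Y$ plane with a ball of degree $-d$ around $\hat p$, since reflection across the $X$-$Y$ plane reverses the orientation of a small symmetric loop, and a self-symmetric ball must carry $d = -d = 0$; hence $D$ is even. Ruling out $D = 0$ via the momentum (see the next paragraph) leaves exactly two balls $B_\pm^\varepsilon$ of degrees $\pm 1$ with centers $p_+^\varepsilon$ and $p_-^\varepsilon = \hat p_+^\varepsilon$, giving (ii) and (iii).

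For (v), I would extract a subsequence with $p_+^\varepsilon \to p_+^0$ and argue that $p_+^0$ lies at latitude $s_1$. Outside ${\cal B}_\varepsilon$ one writes $u_\varepsilon = |u_\varepsilon| e^{i \psi_\varepsilon}$ with $|u_\varepsilon| \to 1$; the sharp energy bound together with Jacobian estimates in the style of \cite{JS}, \cite{SS} adapted to $\cal M$ force $\nabla \psi_\varepsilon$ to converge, modulo the conformal factor $e^{2\mu}$, to $\nabla^\perp \Phi_0$ with $\Phi_0$ as in \eqref{Def Phi_0} for $p_1 = p_+^0$ and $p_2 = \hat p_+^0$. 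Decomposing $P(u_\varepsilon)$ along the latitude circles $C_s$ exactly as in the momentum computation within the proof of Lemma \ref{construct v} --- so that $\partial \chi / \partial \theta$ integrates to $2\pi$ between the two vortex latitudes and to $0$ elsewhere --- yields
\[
    p = \lim_{\varepsilon \to 0} P(u_\varepsilon) = 2\pi \int_{s^*}^{l - s^*} \alpha(s)\,ds,
\]
where $s^*$ is the arc-length coordinate of $p_+^0$. Strict monotonicity in $s^*$ (using $\alpha > 0$ on $(0,l)$) forces $s^* = s_1$, which both rules out $D = 0$ (that case would give $P(u_\varepsilon) = o(1) \neq p$) and shows that the circular orbit $C_+^\varepsilon$ of $p_+^\varepsilon$ under $R(\omega_\varepsilon t)$ approaches $C_+$; the case of $C_-^\varepsilon$ follows by symmetry.

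The main obstacle I anticipate is this phase-convergence step: making rigorous the passage from the sharp energy bound plus ball construction to a mode of convergence for $\nabla \psi_\varepsilon$ strong enough to identify $\lim P(u_\varepsilon)$ with the momentum of the limiting two-vortex configuration, all while correctly carrying the nontrivial conformal factor $e^{2\mu}$ on $\cal M$. The remaining ingredients --- existence and symmetry of the minimizer, the bound $D \leq 2$, and the reflection pairing of degrees --- reduce to combining Lemmas \ref{Prop chi}, \ref{ball construction}, and \ref{construct v} with standard arguments as in \cite{GS}.
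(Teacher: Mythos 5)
Your proposal follows essentially the same route as the paper, which proves the theorem by combining Lemma \ref{construct v} (the competitor construction giving $p_\varepsilon \to p$ and the $2\pi|\ln\varepsilon| + {\cal O}(1)$ energy bound), the direct method for the constrained minimization \eqref{MP}, and the vortex-ball construction of Lemma \ref{ball construction}, then appealing to the argument of \cite{GS} --- which is exactly the degree-counting, reflection-pairing, and momentum-identification scheme you outline. The paper itself leaves these final steps to the citation of \cite{GS} on the grounds that they are geometry-independent, so your fleshed-out version is a faithful reconstruction of the intended proof rather than a departure from it.
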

The theorem above indicates that if one begins with a 2-vortex solution to the system of ODE's \eqref{ODE 2} where the vortices are located at heights corresponding to the arc length parameter $s=s_1$ and $l-s_1$ for some $0<s_1<\frac{l}{2}$, then one picks $p_\varepsilon$ as in the construction so that it converges to $p$ given by the formula $2 \pi \int_{s_1}^{l-s_1} \alpha(s) ds$ as in Lemma \ref{construct v}.
Since the method of proving Theorem \ref{main 1} in \cite{GS} is
independent of the geometry, the same argument can be applied here.
Furthermore, Proposition \ref{prop 1} indicates that there
exists a rotating $2n$-vortex solution to \eqref{ODE 2} when $\cal
M$ is symmetric about the $X$-$Y$ plane. Then we define
\[
    S^n_{p_\varepsilon} = \{ u \in H^1({\cal M}; {\mathbb C}): P(u) = p_\varepsilon, u(x) = u(\hat x), 
\]
\[
\mbox{and } u(x) = u(R\left( \frac{2\pi}{n}\right)x) \mbox{ for all } x \in {\cal  M}\}.
\]
In the same spirit as the proof of Theorem \ref{main 1}, we have the following generalized result:
\begin{thm} \label{main 2}
For $n \geqslant 1$, consider a rotating $2n$-vortex solution to \eqref{ODE 2} with circular orbits denoted by $C_\pm$ that are symmetric about the $X$-$Y$ plane. Then for all positive $\varepsilon$ sufficiently small, there exists a solution to \eqref{GP} of the form $U_\varepsilon(x,t) = u_\varepsilon(R(\omega_\varepsilon t)x)$ with $u_\varepsilon \in S_{p_\varepsilon}^n$. Furthermore, there exists a finite collection of disjoint balls ${\cal B}_\varepsilon$, including $2n$ balls $\{ B_{j,\pm}^\varepsilon \}_{j=1}^n$ such that
\begin{itemize}
\item[ \emph{(i)}] $E_\varepsilon(u_\varepsilon) \leqslant 2\pi n | \ln \varepsilon |+{\cal O}(1)$ as $\varepsilon \rightarrow 0$.
\item[ \emph{(ii)}] The balls $B_{j,\pm}^\varepsilon$ and their centers $p_{j,\pm}^\varepsilon$ are symmetric about the $X$-$Y$ plane and invariant under a $\frac{2 \pi}{n}$ rotation about the $Z$-axis. Furthermore, their common radius $r_\varepsilon$ converges to zero as $\varepsilon \rightarrow 0$.
\item[ \emph{(iii)}] $deg(u_\varepsilon,\partial B_{j,\pm}^\varepsilon)=\pm 1$ and $deg(u_\varepsilon,\partial B)=0$ for any $B \in {\cal B}_\varepsilon \setminus \{ B_{j,\pm}^\varepsilon \}_{j=1}^n$.
\item[ \emph{(iv)}] $| u_\varepsilon(x) |>1/2$ for $x \in {\cal M} \setminus {\cal B}_\varepsilon$ and $| {\cal B}_\varepsilon | \rightarrow 0$ as $\varepsilon \rightarrow 0$.
\item[ \emph{(v)}] The circular orbits $C_\pm^\varepsilon$ associated with the rotation of $p_\pm^\varepsilon$ approach $C_\pm$ as $\varepsilon \rightarrow 0$.
\end{itemize}
\end{thm}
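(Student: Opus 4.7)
The plan is to mirror the proof of Theorem \ref{main 1}, upgraded from two vortices to $2n$ vortices by incorporating the additional $\mathbb Z_n$ rotational symmetry into the admissible class $S^n_{p_\varepsilon}$. The argument splits into four stages: (i) construct a symmetric competitor realizing the energy budget $2\pi n|\ln\varepsilon|+O(1)$; (ii) extract a minimizer $u_\varepsilon$ and verify it solves the rotating equation \eqref{GP2}; (iii) use the vortex-ball lemma to force the exact $2n$-vortex structure with degrees $\pm 1$; (iv) identify the limiting orbits with the prescribed $C_\pm$.

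For (i), generalize Lemma \ref{construct v}. Let $s_1 \in (0,l/2)$ be the latitude of $C_+$ and set
\begin{equation*}
x_{1,j}=\bigl(\alpha(s_1)\cos\tfrac{2\pi j}{n},\,\alpha(s_1)\sin\tfrac{2\pi j}{n},\,\beta(s_1)\bigr),\quad j=0,\ldots,n-1,
\end{equation*}
with reflected counterparts $\hat x_{1,j}$ on $C_-$. I would define
\begin{equation*}
\Phi_0(x)=\sum_{j=0}^{n-1}\bigl[\ln|\tilde x-\tilde x_{1,j}|-\ln|\tilde x-\hat{\tilde x}_{1,j}|\bigr]
\end{equation*}
and let $\chi$ be its multivalued conjugate via \eqref{chi}. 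Because $\Phi_0$ is $\mathbb Z_n$-invariant and antisymmetric under $x\mapsto\hat x$, the argument of Lemma \ref{Prop chi}(iii) summed over the $n$ reflected pairs shows that $\chi$ is both $\mathbb Z_n$-invariant and symmetric under reflection modulo $2\pi$. Define $v_\varepsilon = e^{i\chi}$ outside the $2n$ geodesic disks $B^\mu_{j,\pm}(\varepsilon+\varepsilon^2)$ and interpolate to a standard degree-$(\pm 1)$ profile inside, exactly as in Lemma \ref{construct v}. Summing the $2n$ local contributions and the Green's-function boundary terms yields $E_\varepsilon(v_\varepsilon)\leq 2\pi n|\ln\varepsilon|+O(1)$, while for a latitude circle $C_s$ with $s_1<s<l-s_1$ (which encloses $n$ degree-$+1$ vortices) the integral $\int_0^{2\pi}\partial_\theta\chi\,d\theta=2\pi n$, giving $P(v_\varepsilon)\to p:=2\pi n\int_{s_1}^{l-s_1}\alpha(s)\,ds$.

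Stage (ii) is the direct method: the three constraints defining $S^n_{p_\varepsilon}$ are preserved under weak $H^1$ convergence and $E_\varepsilon$ is weakly lower semicontinuous, so a minimizer $u_\varepsilon$ exists. Since $E_\varepsilon$, $P$, the reflection $x\mapsto\hat x$, and the rotation $R(2\pi/n)$ are all compatible (the vector field $\tau=\partial_\theta$ is preserved by both discrete symmetries), the principle of symmetric criticality gives that $u_\varepsilon$ satisfies \eqref{GP2} with a single Lagrange multiplier $\omega_\varepsilon$, and $U_\varepsilon(x,t)=u_\varepsilon(R(\omega_\varepsilon t)x)$ solves \eqref{GP}. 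For stage (iii), apply Lemma \ref{ball construction} to $u_\varepsilon$ with $\zeta$ close to $1$: the upper bound $E_\varepsilon(u_\varepsilon)\leq E_\varepsilon(v_\varepsilon)$ combined with part (iii) of that lemma forces $D\leq 2n$, while the momentum constraint $P(u_\varepsilon)\to p$ together with a Jacobian lower bound as in \cite{GS} forces $D\geq 2n$; hence $D=2n$ with each vortex of degree $\pm 1$, and the imposed symmetries arrange them into the two rings $\{B_{j,\pm}^\varepsilon\}_{j=1}^n$ described in (ii)--(iv).

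The hardest step is (v), the convergence of $C_\pm^\varepsilon$ to the prescribed $C_\pm$. Any subsequential limit of the centers $p_\pm^\varepsilon$ must be a critical point of the renormalized energy \eqref{Wdefna} among symmetric $2n$-vortex configurations, and under the rotating ansatz such critical points are precisely the rotating solutions of \eqref{ODE 2} produced in Proposition \ref{prop 1}. The momentum value $p=2\pi n\int_{s_1}^{l-s_1}\alpha(s)\,ds$ then pins down the latitude $s_1$, singling out the prescribed orbits $C_\pm$. Since the \cite{GS} machinery is geometry-independent and the added $\mathbb Z_n$ symmetry merely restricts the admissible class without altering the energy-momentum analysis, the remainder of the argument carries over essentially verbatim.
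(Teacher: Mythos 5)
Your proposal follows essentially the same route as the paper, which itself proves this theorem only by asserting that the argument of Theorem \ref{main 1} (i.e.\ the machinery of \cite{GS}: symmetric competitor construction, constrained minimization, vortex-ball analysis, and identification of the limiting orbits via the momentum constraint) carries over verbatim once the admissible class is enlarged to $S^n_{p_\varepsilon}$. Your sketch is in fact more detailed than the paper's, and the details you supply (the $\mathbb Z_n$-invariant phase $\chi$, the momentum value $2\pi n\int_{s_1}^{l-s_1}\alpha$, and the degree count $D=2n$) are the correct generalizations.
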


\begin{rk}
From \eqref{eq for r general} in section 2, there can be a rotating $2n$-vortex solution to \eqref{ODE 2} with orbits $C_{\pm}$ on some non-symmetric surfaces of revolution. Note that Theorem 4.3 in \cite{KC2} indicates that for finite times there exists s solution to \eqref{GP} with vortices following these orbits. It would be interesting to see if one can construct a periodic solution to \eqref{GP} with vortices following these orbits all the time.
\end{rk}

\section*{Acknowledgment}
I would like to express my appreciation and thanks to my adviser, Professor Peter Sternberg, for his invaluable advice on this paper.

\end{document}